\theoremstyle{plain}
\newtheorem{theorem}{Theorem}[section]
\newtheorem*{theorem*}{Theorem}
\newtheorem{lemma}[theorem]{Lemma}
\theoremstyle{definition}
\newtheorem{definition}[theorem]{Definition}
\newtheorem{remark}[theorem]{Remark}
\newtheorem{example}[theorem]{Example}
\newcommand{\enm}[1]{\ensuremath{#1}}          %
\newcommand{\op}[1]{\operatorname{#1}}
\newcommand{\cal}[1]{\mathcal{#1}}
\newcommand{\CC}{\enm{\mathbb{C}}}
\newcommand{\NN}{\enm{\mathbb{N}}}
\newcommand{\ZZ}{\enm{\mathbb{Z}}}
\newcommand{\PP}{\enm{\mathbb{P}}}
\newcommand{\Aa}{\enm{\cal{A}}}
\newcommand{\Bb}{\enm{\cal{B}}}
\newcommand{\Ee}{\enm{\cal{E}}}
\newcommand{\Ff}{\enm{\cal{F}}}
\newcommand{\Ll}{\enm{\cal{L}}}
\newcommand{\Oo}{\enm{\cal{O}}}
\newcommand{\Qq}{\enm{\cal{Q}}}
\newcommand{\Vv}{\enm{\cal{V}}}
\renewcommand{\phi}{\varphi}
\renewcommand{\theta}{\vartheta}
\renewcommand{\epsilon}{\varepsilon}
\newcommand{\Pic}{\op{Pic}}
\newcommand{\Ext}{\op{Ext}}
\renewcommand{\to}[1][]{\xrightarrow{\ #1\ }}
\newcommand{\old}[1]{}
\begin{document}

\title[Ulrich bundles on smooth projective varieties of minimal degree]{Ulrich bundles on smooth projective varieties of minimal degree}

\author{M. Aprodu, S. Huh, F. Malaspina and J. Pons-Llopis}

\address{Faculty of Mathematics and Computer Science, University of Bucharest, 14 Academiei
Street, 010014 Bucharest, Romania \& "Simion Stoilow" Institute of Mathematics of the Romanian Academy, P. O. Box 1-764, 014700 Bucharest, Romania}
\email{marian.aprodu@fmi.unibuc.ro \& marian.aprodu@imar.ro}

\address{Sungkyunkwan University, Suwon 440-746, Korea}
\email{sukmoonh@skku.edu}

\address{Politecnico di Torino, Corso Duca degli Abruzzi 24, 10129 Torino, Italy}
\email{francesco.malaspina@polito.it}

\address{Department of Mathematics, Kyoto University, Kyoto, Japan}
\email{ponsllopis@gmail.com}

\keywords{Ulrich bundle, rational normal scroll, Beilinson spectral theorem}
\thanks{The first author was partly supported by a UEFISCDI grant. The second author is supported by Basic Science Research Program 2015-037157 through NRF funded by MEST and the National Research Foundation of Korea(KRF) 2016R1A5A1008055 grant funded by the Korea government(MSIP). The third author is supported by the framework of PRIN 2010/11 \lq Geometria delle variet\`a algebriche\rq, cofinanced by MIUR and GNSAGA of INDAM (Italy). The forth author is supported by a FY2015 JSPS Postdoctoral Fellowship}

\subjclass[2010]{Primary: {14J60}; Secondary: {13C14, 14F05}}

\begin{abstract}
We classify the Ulrich vector bundles of arbitrary rank on smooth projective varieties of minimal degree. In the process, we prove the stability of the sheaves of relative differentials on rational scrolls.
\end{abstract}

\maketitle


\section{Introduction}
There has been increasing interest on the classification of arithmetically Cohen-Macaulay (for short aCM) sheaves on various projective varieties, which is important in a sense that the aCM sheaves are considered to give a measurement of complexity of the underlying space. A special type of aCM sheaves, called the Ulrich sheaves, are the ones achieving the maximum possible minimal number of generators. It is conjectured in \cite{ES} that any variety supports an Ulrich sheaf. But the conjecture has been checked only for a few varieties, e.g. in case of surfaces, del Pezzo surfaces, rational normal scrolls, rational aCM surfaces in $\PP^4$, ruled surfaces  and so on; see  \cite{CKM, MR, MP, ACM}. Although there are some occasions where the classification problem of Ulrich bundles of special type is done as in \cite
{CCHMW, CM1, CM2}, the completion of classification problem is difficult in usual.

In this article we pay our attention to the smooth projective varieties of minimal degree; smooth quadric hypersurfaces, the Veronese surface in $\PP^5$ and rational normal scrolls $S=S(a_0, \ldots, a_n)$ associated to rank $n+1$ bundle $\oplus_{i=0}^n \Oo_{\PP^1}(a_i)$ on $\PP^1$. We choose suitable full exceptional collections of those varieties to apply the Beilinson spectral sequence and characterize the Ulrich bundles supporting the varieties. In \cite{MR} the representation type of $S$ is determined by considering a certain type of family of Ulrich sheaves and it motivates the study in this article. 

In case of the Veronese surface and smooth quadric hypersurfaces, the classical result that the only indecomposable Ulrich bundles are $\Omega_{\PP^2}^1(3)$ and the spinor bundles, respectively, can be recovered; see Examples \ref{ver} and \ref{qua}. The main result of this article is the characterization of Ulrich bundles on smooth rational normal scrolls as the bundles admitting a special type of filtration; see Theorem \ref{volon}. As a direct consequence, we observe that the moduli spaces of Ulrich bundles are zero-dimensional; see remark \ref{con}. Since any non-linearly normal smooth variety of almost minimal degree is given as the image of a linear projection of a smooth variety of minimal degree (and the projection is an isomorphism in this case), the classification of Ulrich bundles on the non-linearly normal smooth varieties of almost minimal degree can be also given; see \cite{Park}. Indeed, being Ulrich only depends on the chosen polarization $(X,\Ll)$, not on taking the embedding on the complete linear system $H^0(X,\Ll)$.

Here we summarize the structure of this article. In Section \ref{sec2} we introduce the definition of Ulrich sheaves and several notions in derived category of coherent sheaves to understand the Beilinson spectral sequence. Then we investigate the indecomposable Ulrich bundles on the Veronese surface and smooth quadric hypersurfaces. In Section \ref{sec3} we collect several technical results about several bundles on rational normal scrolls; see Lemmas \ref{bb} and \ref{llem}. Then we fix a certain full exceptional collection of the derived categories in Example \ref{fec}, based on which we apply the Beilinson spectral sequence to characterize the Ulrich bundles in Theorem \ref{volon}.


\section{Preliminaries}\label{sec2}
Throughout the article our base field is the filed of complex numbers $\CC$.

\begin{definition}
A coherent sheaf $\Ee$ on a projective variety $X$ with a fixed ample line bundle $\Oo_X(1)$ is called {\it arithmetically Cohen-Macaulay} (for short, aCM) if it is locally Cohen-Macaulay and $H^i(\Ee(t))=0$ for all $t\in \ZZ$ and $i=1, \ldots, \dim (X)-1$.
\end{definition}



\begin{definition}
For an {\it initialized} coherent sheaf $\Ee$ on $X$, i.e. $h^0(\Ee(-1))=0$ but $h^0(\Ee)\ne 0$, we say that $\Ee$ is an {\it Ulrich sheaf} if it is aCM and $h^0(\Ee)=\deg (X)\mathrm{rank}(\Ee)$.
\end{definition}

Given a smooth projective variety $X$, let $D^b(X)$ be the the bounded derived category of coherent sheaves on $X$. An object $E \in D^b(X)$ is called {\it exceptional} if $\Ext^\bullet(E,E) = \mathbb C$.
A set of exceptional objects $\langle E_0, \ldots, E_n\rangle$ is called an {\it exceptional collection} if $\Ext^\bullet(E_i,E_j) = 0$ for $i > j$. An exceptional collection is said to be {\it full} when $\Ext^\bullet(E_i,A) = 0$ for all $i$ implies $A = 0$, or equivalently when $\Ext^\bullet(A, E_i) = 0$ does the same.

\begin{definition}\label{def:mutation}
Let $E$ be an exceptional object in $D^b(X)$.
Then there are functors $\mathbb L_{E}$ and $\mathbb R_{E}$ fitting in distinguished triangles
$$
\mathbb L_{E}(T) 		\to	 \Ext^\bullet(E,T) \otimes E 	\to	 T 		 \to	 \mathbb L_{E}(T)[1]
$$
$$
\mathbb R_{E}(T)[-1]	 \to 	 T 		 \to	 \Ext^\bullet(T,E)^* \otimes E	 \to	 \mathbb R_{E}(T)	
$$
The functors $\mathbb L_{E}$ and $\mathbb R_{E}$ are called respectively the \emph{left} and \emph{right mutation functor}.
\end{definition}


The collections given by
\begin{align*}
E_i^{\vee} &= \mathbb L_{E_0} \mathbb L_{E_1} \dots \mathbb L_{E_{n-i-1}} E_{n-i};\\
^\vee E_i &= \mathbb R_{E_n} \mathbb R_{E_{n-1}} \dots \mathbb R_{E_{n-i+1}} E_{n-i},
\end{align*}
are again full and exceptional and are called the \emph{right} and \emph{left dual} collections. The dual collections are characterized by the following property; see \cite[Section 2.6]{GO}.
\begin{equation}\label{eq:dual characterization}
\Ext^k(^\vee E_i, E_j) = \Ext^k(E_i, E_j^\vee) = \left\{
\begin{array}{cc}
\mathbb C & \textrm{\quad if $i+j = n$ and $i = k $} \\
0 & \textrm{\quad otherwise}
\end{array}
\right.
\end{equation}

\begin{theorem}[Beilinson spectral sequence]\label{thm:Beilinson}
Let $X$ be a smooth projective variety and with a full exceptional collection $\langle E_0, \ldots, E_n\rangle$ of objects for $D^b(X)$. Then for any $A$ in $D^b(X)$ there is a spectral sequence 
with the $E_1$-term
\[
E_1^{p,q} =\bigoplus_{r+s=q} \Ext^{n+r}(E_{n-p}, A) \otimes \mathcal H^s(E_p^\vee )
\]
which is functorial in $A$ and converges to $\mathcal H^{p+q}(A)$.
\end{theorem}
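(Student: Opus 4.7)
The plan is to derive the spectral sequence from a resolution of the diagonal $\Delta \subset X\times X$, following the classical Beilinson strategy. Let $p_1, p_2 \colon X \times X \to X$ denote the two projections. Since the Fourier--Mukai transform with kernel $\Oo_\Delta$ is the identity, for every $A \in D^b(X)$ one has
\[
A \ \simeq\ Rp_{2*}\bigl(p_1^{*} A \otimes^{L} \Oo_\Delta\bigr).
\]
Hence any quasi-isomorphism of $\Oo_\Delta$ with a complex whose terms are external tensor products built from the collection and its dual will produce, after applying $Rp_{2*}(p_1^{*}A \otimes^{L} -)$, a double complex whose associated hypercohomology spectral sequence converges to $\mathcal H^\bullet(A)$.

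The first step is to produce such a resolution. Fullness of $\langle E_0,\ldots,E_n\rangle$ promotes it to a semiorthogonal decomposition of $D^b(X\times X)$ with components $p_1^{*}E_p \otimes p_2^{*} D^b(X)$, and the projection functors associated to this decomposition produce a canonical filtration of any object, applied in particular to $\Oo_\Delta$. The orthogonality relation \eqref{eq:dual characterization} then identifies the $p$-th graded piece of this filtration, up to the cohomological shift dictated by the codimension of the stratum, with $E_{n-p}^\vee \boxtimes E_p$. This is the form taken by Beilinson's resolution of the diagonal for a general full exceptional collection.

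With this resolution in hand, the $E_1$-page is computed column by column. The projection formula together with $Rp_{2*} \circ p_1^{*} = R\Gamma(X,-)\otimes_{\CC} \Oo_X$ reduces the contribution at column $p$ to $R\Gamma(X, A \otimes E_{n-p}^\vee) \otimes \mathcal H^\bullet(E_p^\vee)$, which equals $R\Hom(E_{n-p},A) \otimes \mathcal H^\bullet(E_p^\vee)$. Collecting the cohomological shifts, namely those coming from the mutations building the resolution and the internal ones from the gradings of $E_p^\vee$, accounts for the shift by $n$ appearing in the Ext-index of the statement; writing $q=r+s$ then produces the desired $E_1$-term. Functoriality in $A$ is automatic since every step is a functorial construction in $A$.

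The main technical obstacle is the construction of the resolution of $\Oo_\Delta$ together with the careful tracking of cohomological shifts. For $\PP^n$ with the standard Beilinson collection this is the classical Koszul resolution and the bookkeeping is transparent; in general the resolution is obtained inductively by mutating the exceptional collection and invoking \eqref{eq:dual characterization} at each step to identify the graded pieces. Ensuring that the shifts accumulate precisely to the index $n+r$ appearing in the Ext-term, rather than a shift that is off by one somewhere, is the delicate bookkeeping point of the argument.
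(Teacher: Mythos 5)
The paper does not actually prove Theorem \ref{thm:Beilinson}; it quotes it from the literature, and the cited proofs (Rudakov, Gorodentsev--Kuleshov, B\"ohning) obtain the spectral sequence from the canonical Postnikov system that a full exceptional collection attaches to the object $A$ \emph{directly inside} $D^b(X)$, via iterated mutation triangles and the orthogonality relations \eqref{eq:dual characterization}. Your route --- resolving $\Oo_\Delta$ in $D^b(X\times X)$ and applying $Rp_{2*}(p_1^{*}A\otimes^{L}-)$ --- is the other standard approach and is viable in principle: the semiorthogonal decomposition of $D^b(X\times X)$ induced by the collection on one factor does exist (this needs a K\"unneth argument or a reference, which you should supply), and its Postnikov tower applied to $\Oo_\Delta$ generalizes Beilinson's resolution of the diagonal. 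The trade-off is that you do all the work on $X\times X$ and then push down, whereas the cited proofs never leave $X$.

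However, the central step --- identifying the graded pieces of the filtration of $\Oo_\Delta$ and tracking the shifts --- is exactly where your write-up is both incomplete and internally inconsistent. You set up the decomposition with components $p_1^{*}E_p\otimes p_2^{*}D^b(X)$, so the $p$-th graded piece must have the form $E_p\boxtimes C_p$; yet you declare it to be $E_{n-p}^\vee\boxtimes E_p$, and two lines later the output of $Rp_{2*}(p_1^{*}A\otimes -)$ carries a factor $\mathcal H^\bullet(E_p^\vee)$ in the second slot, which could only arise from a kernel of the form $(-)\boxtimes E_p^\vee$. You are also conflating two different duals: in $R\Gamma(X,A\otimes E_{n-p}^\vee)=R\Hom(E_{n-p},A)$ the symbol $(-)^\vee$ must mean the derived dual $R\mathcal{H}om(-,\Oo_X)$, whereas in the statement $E_p^\vee$ is the mutation-theoretic right dual; the graded piece you need is, up to shift, $R\mathcal{H}om(E_{n-p},\Oo_X)\boxtimes E_p^\vee$, with one dual of each kind. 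Finally, ``the cohomological shift dictated by the codimension of the stratum'' is not a correct heuristic --- there is no stratification here; the shifts are forced purely by the triangulated structure of the Postnikov tower and by \eqref{eq:dual characterization}, and since the whole content of the theorem is that they assemble into the index $n+r$, deferring precisely this verification to ``delicate bookkeeping'' leaves the proof with a genuine gap. To close it, compute the projections of $\Oo_\Delta$ by pairing against the objects $E_i\boxtimes E_j$ and invoking \eqref{eq:dual characterization}, or simply follow the one-variable Postnikov argument of the cited sources.
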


The statement and proof of Theorem \ref{thm:Beilinson} can be found both in  \cite[Corollary 3.3.2]{RU}, in \cite[Section 2.7.3]{GO} and in \cite[Theorem 2.1.14]{BO}. 


Let us assume next that the full exceptional collection  $\langle E_0, \ldots, E_n\rangle$ contains only pure objects of type $E_i=\mathcal E_i^*[-k_i]$ with $\mathcal E_i$ a vector bundle for each $i$, and moreover the right dual collection $\langle E_0^\vee, \ldots, E_n^\vee\rangle$ consists of coherent sheaves. Then the Beilinson spectral sequence is much simpler since 
\[
E_1^{p,q}=\Ext^{n+q}(E_{n-p}, A) \otimes E_p^\vee=H^{n+q+k_{n-p}}(\mathcal E_{n-p}\otimes A)\otimes E_p^\vee.
\]

Note however that the grading in this spectral sequence applied for the projective space is slightly different from the grading of the usual Beilison spectral sequence, due to the existence of shifts by $n$ in the index $p,q$. Indeed, the $E_1$-terms of the usual spectral sequence are $H^q(A(p))\otimes \Omega^{-p}(-p)$ which are zero for positive $p$. To restore the order, one needs to change slightly the gradings of the spectral sequence from Theorem \ref{thm:Beilinson}. If we replace, in the expression 
\[
E_1^{u,v} = \mathrm{Ext}^{v}(E_{-u},A) \otimes E_{n+u}^\vee=
H^{v+k_{-u}}(\mathcal E_{-u}\otimes A) \otimes \mathcal F_{-u}
\]
$u=-n+p$ and $v=n+q$ so that the fourth quadrant is mapped to the second quadrant, we obtain the following version of the Beilinson spectral sequence:



\begin{theorem}\label{use}
Let $X$ be a smooth projective variety with a full exceptional collection 
$\langle E_0, \ldots, E_n\rangle$ 
where $E_i=\mathcal E_i^*[-k_i]$ with each $\mathcal E_i$ a vector bundle and $(k_0, \ldots, k_n)\in \ZZ^{\oplus n+1}$ such that there exists a sequence $\langle F_n=\mathcal F_n, \ldots, F_0=\mathcal F_0\rangle$ of vector bundles satisfying
\begin{equation}\label{order}
\mathrm{Ext}^k(E_i,F_j)=H^{k+k_i}( \mathcal E_i\otimes \mathcal F_j) =  \left\{
\begin{array}{cc}
\mathbb C & \textrm{\quad if $i=j=k$} \\
0 & \textrm{\quad otherwise}
\end{array}
\right.
\end{equation}
i.e. the collection $\langle F_n, \ldots, F_0\rangle$ labelled in the reverse order is the right dual collection of $\langle E_0, \ldots, E_n\rangle$.
Then for any coherent sheaf $A$ on $X$ there is a spectral sequence in the square $-n\leq p\leq 0$, $0\leq q\leq n$  with the $E_1$-term
\[
E_1^{p,q} = \mathrm{Ext}^{q}(E_{-p},A) \otimes F_{-p}=
H^{q+k_{-p}}(\mathcal E_{-p}\otimes A) \otimes \mathcal F_{-p}
\]
which is functorial in $A$ and converges to 
\begin{equation}
E_{\infty}^{p,q}= \left\{
\begin{array}{cc}
A & \textrm{\quad if $p+q=0$} \\
0 & \textrm{\quad otherwise.}
\end{array}
\right.
\end{equation}
\end{theorem}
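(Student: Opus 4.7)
The plan is to deduce Theorem \ref{use} as a bookkeeping reformulation of the Beilinson spectral sequence of Theorem \ref{thm:Beilinson}, combined with the simplifications forced by the purity assumptions. First I would apply Theorem \ref{thm:Beilinson} to the full exceptional collection $\langle E_0,\ldots,E_n\rangle$ and to $A$ regarded as an object of $D^b(X)$; this immediately produces a fourth-quadrant spectral sequence with
\[
E_1^{p,q} = \bigoplus_{r+s=q} \Ext^{n+r}(E_{n-p}, A) \otimes \mathcal H^s(E_p^\vee),
\]
indexed by $0\leq p\leq n$ and $-n\leq q\leq 0$, converging to $\mathcal H^{p+q}(A)$.

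The next step is to simplify this $E_1$-page. The characterization (\ref{eq:dual characterization}) of the right dual collection, compared with the vanishing condition (\ref{order}), forces the identification $E_i^\vee = F_{n-i}$ in $D^b(X)$. Since each $F_j$ is by hypothesis a vector bundle concentrated in cohomological degree $0$, every summand with $s\neq 0$ vanishes and only $s=0$ contributes. The Ext group can then be rewritten as sheaf cohomology using $E_i=\mathcal E_i^*[-k_i]$ together with local freeness of $\mathcal E_i$, so that
\[
E_1^{p,q} = H^{n+q+k_{n-p}}(\mathcal E_{n-p}\otimes A) \otimes F_{n-p}.
\]

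Finally I would perform the change of variables $(p,q)\mapsto (u,v)=(p-n,\,q+n)$, which is a bijection between the square $0\leq p\leq n$, $-n\leq q\leq 0$ of the original spectral sequence and the square $-n\leq u\leq 0$, $0\leq v\leq n$ of the statement, preserves total degree $p+q=u+v$, and replaces the indices $n-p$ and $n+q$ by $-u$ and $v$. After substitution the $E_1$-term takes exactly the form claimed, and the abutment $\mathcal H^{u+v}(A)$ collapses to $A$ in total degree $0$ and to $0$ elsewhere since $A$ is a coherent sheaf.

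I do not anticipate any genuine obstacle: the only step that demands real care is matching the indices in (\ref{order}) with those of (\ref{eq:dual characterization}) to confirm the identification $E_i^\vee = F_{n-i}$ (as opposed to $E_i^\vee = F_i$, which would result from the alternative reading of the phrase ``labelled in the reverse order''). The content of Theorem \ref{use} is ultimately a convenient second-quadrant repackaging of the classical Beilinson spectral sequence, tailored to the format used in the classification of Ulrich bundles in Section \ref{sec3}.
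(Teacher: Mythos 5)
Your proposal is correct and follows essentially the same route as the paper, which derives Theorem \ref{use} in the discussion immediately preceding its statement: specialize Theorem \ref{thm:Beilinson} using purity of the right dual collection to kill the $s\neq 0$ summands, rewrite $\Ext^{n+q}(E_{n-p},A)$ as $H^{n+q+k_{n-p}}(\mathcal E_{n-p}\otimes A)$ via $E_i=\mathcal E_i^*[-k_i]$, and re-index by $u=p-n$, $v=q+n$. Your identification $E_i^\vee=F_{n-i}$ is the correct reading of condition (\ref{order}) against (\ref{eq:dual characterization}) and agrees with the paper's convention.
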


\begin{example}\label{ver}
The Veronese surface $V\subset\mathbb P^5$ is an arithmetically Cohen-Macaulay embedding which is not arithmetically Gorenstein. The study of vector bundles on $V$ is trivial if we view $V$ as $\mathbb P^2$ embedded with $\Oo_{\PP^2}(2)$. We consider the full exceptional collection $\langle E_0, E_1, E_2\rangle=\langle \Oo_{\PP^2}, \Oo_{\PP^2}(1), \Oo_{\PP^2}(2)\rangle$ and then its dual in reverse order $\Ee_{\bullet}$ consists of the following
$$\Ee_2=\Oo_{\PP^2}(-2)~,~\Ee_1=\Oo_{\PP^2}(-1)~,~ \Ee_0=\Oo_{\PP^2}.$$

\noindent Define $F_k=\Ff_k$ to be the vector bundle satisfying Condition (\ref{order}) in Theorem \ref{use} with respect to the prescribed collection $\langle E_0, E_1, E_2\rangle$, that is, 
$$
 \Ff_2=\Oo_{\PP^2}(-1)~,~ \Ff_1=\Omega_{\PP^2}^1(1)~,~  \Ff_0=\Oo_{\PP^2}. 
$$
In other words, the collection $\langle F_2, F_1, F_0\rangle$ is the right dual collection of $\langle E_0, E_1, E_2 \rangle$. 

Let $\Aa$ be a vector bundle on $V$ such that $\Aa(2)$ is Ulrich. Then we construct a Beilinson complex, quasi-isomorphic to $\Aa$, by calculating $H^i(\Aa\otimes \Ee_j)\otimes \Ff_j$ with  $i,j \in \{0,1,2\}$. We may assume due to \cite[Proposition 2.1]{ES} that
$$H^i(\Aa(-2))=H^i(\Aa)=0 \text{ for all $i$}$$
and then we can easily construct the Beilinson table as below; we put the collection $\Ff_{\bullet}$ on the top row and the collection $\Ee_{\bullet}$ on the bottom row
 \begin{center}
     \begin{tabular}{|c|c|c|}
       \hline
  $ \Oo_{\PP^2}(-1)$    & $\Omega_{\PP^2}^1(1)$ &$\Oo_{\PP^2}$  \\
       \hline
       \hline
       $0$ & $0$ & $0$  \\
       \hline
       $0$ & $a$ & $0$  \\
              \hline
       $0$& $0$& $0$  \\
        \hline
       \hline
       $\Oo_{\PP^2}(-2)$ & $\Oo_{\PP^2}(-1)$ & $\Oo_{\PP^2}$  \\
       \hline
     \end{tabular}
     \end{center}
We notice that the only nonzero entry $a:= h^1(\Aa(-1))$ appears on the diagonal. So the table is very simple and we can deduce that $\Omega_{\PP^2}^1(3)$ is the only indecomposable Ulrich bundle on $V$.
\end{example}

\begin{example}\label{qua}
Let $\Qq_4 $ be the smooth quadric hypersurface of $\mathbb P^{5}$. It has two distinct spinor bundles $\Sigma_1$ and $\Sigma_2$ of rank two given by the exact sequences
$$ 0 \to \Sigma_i ^\vee \rightarrow \Oo_{\Qq_4}^{\oplus 4}  \rightarrow \Sigma_{3-i} \to 0$$
for $i\in \{1,2\}$. As in Example \ref{ver} we may consider the full exceptional collection 
$$\langle E_0, \ldots, E_5\rangle=\langle \Oo_{\Qq_4},~\Oo_{\Qq_4}(1),~\Oo_{\Qq_4},~\Sigma_1^\vee(3),~\Sigma_2^\vee(3)[1],~\Oo_{\Qq_4}(3)[1]\rangle, $$
i.e. we have 
\begin{align*}
&\Ee_5=\Oo_{\Qq_4}(-3)~,~ \Ee_4=\Sigma_2(-3)~,~ \Ee_3=\Sigma_1(-3),\\
&\Ee_2=\Oo_{\Qq_4}(-2)~,~\Ee_1=\Oo_{\Qq_4}(-1)~,~ \Ee_0=\Oo_{\Qq_4}
\end{align*}
and $(k_0, \ldots, k_5)=(0,0,0,0,-1,-1)$. Then its right dual collection $\langle F_5,\ldots, F_0\rangle$ consists of the following vector bundles $F_i=\Ff_i$ for each $i$:
\begin{align*}
&\Ff_5=\Oo_{\Qq_4}(-1)~,~ \Ff_4=\Sigma_1(-1)~,~ \Ff_3=\Sigma_2(-1), \\
&\Ff_2=\Omega^2_{\PP^5}(2)_{|\Qq_4}~,~ \Ff_1=\Omega^1_{\PP^5}(1)_{|\Qq_4}~,~ \Ff_0=\Oo_{\Qq_4},
\end{align*}
due to the characterization in (\ref{order}). 

Let $\Aa$ be a vector bundle on $\Qq_4$ such that $\Aa(1)$ is Ulrich. As in Example \ref{ver} we construct a Beilinson complex, quasi-isomorphic to $\Aa$, by calculating $H^{i+k_j}(\Aa\otimes \Ee_j)\otimes \Ff_j$ with $i,j \in \{0,\dots 5\}$. Again we may assume due to \cite[Proposition 2.1]{ES} that
$$H^i(\Aa(-3))=H^i(\Aa(-2))=H^i(\Aa(-1))=H^i(\Aa)=0 \text{ for all $i$}.$$
Moreover there are no nonzero maps from $\Sigma_1(-1)$ to $\Sigma_2(-1)$. Thus we easily can construct the Beilinson table; we put the collection $\Ff_{\bullet}$ on the top row and the collection $\Ee_{\bullet}$ on the bottom row.
 \begin{center}
     \begin{tabular}{|c|c|c|c|c|c|}
       \hline
 $ \Oo_{\Qq_4}(-1)$& $\Sigma_1(-1)$ & $\Sigma_2(-1)$& $\Omega^2_{\PP^5}(2)_{|\Qq_4}$& $\Omega^1_{\PP^5}(1)_{|\Qq_4}$&$\Oo_{\Qq_4}$\\
       \hline
       \hline
       $ 0$& $0$ & $0$& $0$& $0$&$0$\\
       \hline
       $ 0$& $b$ & $0$& $0$& $0$&$0$\\
       \hline
       $ 0$& $0$ & $a$& $0$& $0$&$0$\\
       \hline
       $ 0$& $0$ & $0$& $0$& $0$&$0$\\
       \hline
       $ 0$& $0$ & $0$& $0$& $0$&$0$\\
       \hline
       $ 0$& $0$ & $0$& $0$& $0$&$0$\\
       \hline
       \hline
       $ \Oo_{\Qq_4}(-3)$& $\Sigma_2(-3)$ & $\Sigma_1(-3)$& $\Oo_{\Qq_4}(-2)$& $\Oo_{\Qq_4}(-1)$&$\Oo_{\Qq_4}$\\
       \hline
     \end{tabular}
     \end{center}
We notice that the only nonzero entries $a:= h^3(\Aa\otimes\Sigma_1(-3))$ and $b:=h^3(\Aa\otimes\Sigma_2(-3))$ appear on the diagonal. So the table is very simple and we can deduce that $\Sigma_1$ and $\Sigma_2$ are the only indecomposable Ulrich bundles on $V$.

Similarly for  smooth quadric hypersurfaces of any (even or odd) dimension we can obtain a table with nonzero entries only on the diagonal, proving that the indecomposable Ulrich bundles are the spinor bundles; see \cite{AO}.
\end{example}

In the next section we prove that for any rational normal scroll we can obtain a table with nonzero entries only on the diagonal; we cannot expect such a behavior for varieties of almost minimal degree. Let us see an example of a Del Pezzo threefold in the following.

\begin{example}\label{ver3}
The Veronese threefold $V\subset\mathbb P^9$ is Del Pezzo of degree $8$. The study of vector bundles on $V$ is trivial if we view $V$ as $\mathbb P^3$ embedded with $\Oo_{\PP^3}(2)$. We consider the full exceptional collection $\langle E_0, \ldots, E_3\rangle= \langle \Oo_{\PP^ 3}, \Oo_{\PP^3}(1), \Oo_{\PP^3}(2), \Oo_{\PP^3}(3)\rangle$ and then the collection $\Ee_{\bullet}$ in reverse order is given by 
$$
\Ee_3=\Oo_{\PP^3}(-3)~,~ \Ee_2=\Oo_{\PP^2}(-2)~,~\Ee_1=\Oo_{\PP^3}(-1)~,~ \Ee_0=\Oo_{\PP^3}$$
and the right dual collection is given by
$$
 \Ff_3=\Oo_{\PP^3}(-1)~,~ \Ff_2=\Omega_{\PP^3}^2(2)~,~\Ff_1=\Omega_{\PP^3}^1(1)~, ~ \Ff_0=\Oo_{\PP^3}.
$$

Now let $\Aa$ be a vector bundle on $V$ such that $\Aa(2)$ is Ulrich. Then we construct a Beilinson complex, quasi-isomorphic to $\Aa$, by calculating $H^i(\Aa\otimes \Ee_j)\otimes \Ff_j$ with  $i,j \in \{0,1,2,3\}$. We may assume due to \cite[Proposition 2.1]{ES} that
$$H^i(\Aa(-4))=H^i(\Aa(-2))=H^i(\Aa)=0 \text{ for all $i$}$$
and then we can easily construct the Beilinson table as below; we put the collection $\Ff_{\bullet}$ on the top row and the collection $\Ee_{\bullet}$ on the bottom row
 \begin{center}
     \begin{tabular}{|c|c|c|c|}
       \hline
  $ \Oo_{\PP^3}(-1)$  & $\Omega_{\PP^3}^2(2)$  & $\Omega_{\PP^3}^1(1)$ &$\Oo_{\PP^3}$  \\
       \hline
       \hline
       $0$ & $0$ & $0$ & $0$ \\
       \hline
       $b$ & $0$ & $0$ & $0$ \\
       \hline
       $0$ & $0$ & $a$ & $0$ \\
       \hline
       $0$ & $0$ & $0$ & $0$ \\

        \hline
       \hline
      $\Oo_{\PP^3}(-3)$ & $\Oo_{\PP^3}(-2)$ & $\Oo_{\PP^3}(-1)$ & $\Oo_{\PP^3}$  \\
       \hline
     \end{tabular}
     \end{center}
We notice that the only nonzero entries are $a:= h^1(\Aa(-1))$ which is on the diagonal and $b:= h^1(\Aa(-3))$ which is not on the diagonal.
\end{example}

\begin{remark}
Although we have a nonzero entry off the diagonal in Example \ref{ver3}, we may still use the Beilinson table to describe the Ulrich bundles on $V$. Indeed, any Ulrich bundle $\Ee$ on $V$ fits into the following exact sequence
$$0\to \Oo_{\PP^3}(1)^{\oplus b} \stackrel{\mathrm{M}}{\to} \Omega_{\PP^3}^1(3)^{\oplus a} \to \Ee \to 0.$$
By \cite[Lemma 2.4(iii)]{CH} we also get $a=b$ and so the map $\mathrm{M}=(s_{ij})$ is given as an $(a\times a)$-matrix with entries $s_{ij} \in H^0(\Omega_{\PP^3}^1(2)) \cong \CC^{\oplus 4}$. In particular, there is no Ulrich line bundle on $V$ and we get that the null-correlation bundles on $\PP^3$ twisted by $2$ are the only rank two Ulrich bundles on $V$. 
\end{remark}


\section{Classification on rational normal scroll}\label{sec3}

 Let $S=S(a_0, \ldots, a_n)$ be a smooth rational normal scroll, the image of $\PP (\Ee)$ via the morphism defined by $\Oo_{\PP (\Ee)}(1)$, where $\Ee \cong \oplus_{i=0}^n \Oo_{\PP^1}(a_i)$ is a vector bundle of rank $n+1$ on $\PP^1$ with $0< a_0 \le \ldots \le a_n$. Letting $\pi : \PP (\Ee) \rightarrow \PP^1$ be the projection, we may denote by $H$ and $F$, the hyperplane section corresponding to $\Oo_{\PP(\Ee)}(1)$ and the fibre corresponding to $\pi^*\Oo_{\PP^1}(1)$, respectively. Then we have $\Pic (S)\cong \ZZ\langle H,F\rangle$ and $\omega_S \cong \Oo_S(-(n+1)H+(c-2)F)$, where $c:=\sum_{i=0}^n a_i$ is the degree of $S$. We will simply denote $\Oo_S(aH+bF)$ by $\Oo_S(a+b,a)$, in particular, $\Oo_S(F)=\Oo_S(1,0)$. From now on we fix an ample line bundle on $S$ to be $\Oo_S(H)=\Oo_S(1,1)$.

For the computational purpose, we use the following lemma.
\begin{lemma}[\cite{EH}, see also \cite{MR}]\label{lem}
For any $i=0,\ldots,n+1$, we have
\begin{itemize}
\item [(i)] $H^i(S, \Oo_S(a+b,a)) \cong H^i(\PP^1, \mathrm{Sym}^a\Ee \otimes \Oo_{\PP^1}(b))$ if $a\ge 0$;
\item [(ii)] $H^i(S, \Oo_S(a+b,a)) =0$ if $-n-1<a<0$;
\item [(iii)] $H^i(S, \Oo_S(a+b,a)) \cong H^{n+1-i}(\PP^1, \mathrm{Sym}^{-a-n-1}\Ee \otimes \Oo_{\PP^1}(c-b-2))$ if $a\le -n-1$.
\end{itemize}
\end{lemma}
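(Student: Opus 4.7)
The plan is to identify $S$ with $\PP(\Ee)$ (smoothness of the scroll forces the morphism $\PP(\Ee)\to S$ defined by $\Oo_{\PP(\Ee)}(1)$ to be an isomorphism) and compute $H^i(S,\Oo_S(a+b,a))$ by pushing forward along $\pi\colon \PP(\Ee)\to \PP^1$. Since $\Oo_S(a+b,a)\cong \Oo_{\PP(\Ee)}(a)\otimes \pi^*\Oo_{\PP^1}(b)$, the projection formula yields
\[
R^i\pi_*\Oo_S(a+b,a)\ \cong\ R^i\pi_*\Oo_{\PP(\Ee)}(a)\otimes \Oo_{\PP^1}(b),
\]
so it suffices to record the standard computation of higher direct images of line bundles on a $\PP^n$-bundle and then feed the result into the Leray spectral sequence.

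The relevant inputs are: $\pi_*\Oo_{\PP(\Ee)}(a)=\mathrm{Sym}^a\Ee$ for $a\ge 0$ and zero otherwise; $R^i\pi_*\Oo_{\PP(\Ee)}(a)=0$ for $0<i<n$; and by relative Serre duality
\[
R^n\pi_*\Oo_{\PP(\Ee)}(a)\ \cong\ \mathrm{Sym}^{-a-n-1}\Ee^{\vee}\otimes (\det \Ee)^{-1}
\]
for $a\le -n-1$, and zero otherwise. This last identification rests on $\omega_{\PP(\Ee)/\PP^1}\cong \Oo_{\PP(\Ee)}(-n-1)\otimes \pi^*\det \Ee$, which I will cross-check against the formula $\omega_S\cong \Oo_S(-(n+1)H+(c-2)F)$ given in the text together with $\omega_{\PP^1}=\Oo_{\PP^1}(-2)$.

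With these in hand, the Leray spectral sequence collapses in each regime because at most one direct image is nontrivial. Cases (i) and (ii) are then immediate: for $a\ge 0$ only the degree-zero term contributes and one obtains $H^i(\PP^1, \mathrm{Sym}^a\Ee \otimes \Oo_{\PP^1}(b))$; for $-n-1<a<0$ every direct image vanishes, so all cohomology groups of $\Oo_S(a+b,a)$ vanish. For case (iii), Leray gives
\[
H^i(S, \Oo_S(a+b,a))\ \cong\ H^{i-n}\bigl(\PP^1,\ \mathrm{Sym}^{-a-n-1}\Ee^{\vee}\otimes (\det \Ee)^{-1}\otimes \Oo_{\PP^1}(b)\bigr),
\]
and then Serre duality on $\PP^1$ (using $\omega_{\PP^1}=\Oo_{\PP^1}(-2)$ and $\det \Ee=\Oo_{\PP^1}(c)$) rewrites the right-hand side as $H^{n+1-i}(\PP^1, \mathrm{Sym}^{-a-n-1}\Ee \otimes \Oo_{\PP^1}(c-b-2))$, as claimed.

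No new ideas beyond standard projective-bundle cohomology are required; the only real work is the bookkeeping of conventions (the normalization of $\Oo_{\PP(\Ee)}(1)$, the $\det\Ee$-twist in the relative dualizing sheaf, and the compatibility of Serre duality on fiber and base). The most delicate point is verifying that the vanishing range of $R^n\pi_*$ is precisely complementary to that of $\pi_*$, so that case (ii) really produces identically zero cohomology in every degree.
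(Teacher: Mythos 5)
Your argument is correct: the identification $\Oo_S(a+b,a)\cong\Oo_{\PP(\Ee)}(a)\otimes\pi^*\Oo_{\PP^1}(b)$, the standard computation of $R^i\pi_*\Oo_{\PP(\Ee)}(a)$ (with the $\det\Ee$-twist in $\omega_{\PP(\Ee)/\PP^1}$, consistent with the formula for $\omega_S$ in the text), the collapse of the Leray spectral sequence, and Serre duality on $\PP^1$ give exactly the three cases, including the key complementarity of the nonvanishing ranges of $\pi_*$ and $R^n\pi_*$ needed for (ii). The paper does not prove this lemma but only cites \cite{EH} and \cite{MR}; your proof is the standard one and fills that gap correctly.
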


Recall the dual of the relative Euler exact sequence of $S$:
\begin{equation}\label{eq1}
0\to \Omega_{S|\PP^1}^1 (1,1) \to \Bb:=\oplus_{i=0}^{n}\Oo_S(a_i,0) \to \Oo_S(1,1) \to 0,
\end{equation}
and so we have $\omega_{S|\PP^1} \cong \Oo_S(-(n+1)H+cF)\cong \Oo_S(c-n-1, -n-1)$. The long exact sequence of exterior powers associated to (\ref{eq1}) is

\begin{equation}\label{eqq1}
\begin{split}
0\to &\Oo_S(c-n,-n) \to \wedge^n \Bb(-n+1, -n+1) \stackrel{d_{n-1}}{\to}\\ &\wedge^{n-1}\Bb(-n+2, -n+2) \stackrel{d_{n-2}}{\to} \cdots \stackrel{d_1}{\to} \Bb \to \Oo_S(1,1) \to 0.
\end{split}
\end{equation}
Now (\ref{eqq1}) splits into
\begin{equation}\label{eqqq1}
0\to \Omega_{S|\PP^1}^i (i,i)\to \wedge^i \Bb \to \Omega_{S|\PP^1}^{i-1}(i-1,i-1) \to 0
\end{equation}
for each $i=1,\ldots, n$, and we have $\mathrm{Im}(d_i\otimes \Oo_S(i-1,i-1))\cong \Omega_{S|\PP^1}^{i}(i,i)\subset \wedge^i \Bb$.

\begin{lemma}
\label{lemma:BuildingBlocks}\label{bb}
The bundle $\Omega_{S|\PP^1}^i(i,i+1)$ is Ulrich with slope $c-1$ with respect to $H=\Oo_S(1,1)$ for each $i=0,\dots, n$.
\end{lemma}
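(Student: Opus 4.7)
My plan is to verify the cohomological Ulrich criterion by combining the Leray spectral sequence for $\pi: S \to \PP^1$ with the relative Bott formula on its $\PP^n$-fibers. By \cite[Proposition 2.1]{ES}, it suffices to prove that $H^j(\Omega^i_{S|\PP^1}(i, i+1) \otimes \Oo_S(-pH)) = 0$ for every $j \ge 0$ and every $p = 1, \ldots, \dim S = n+1$. Rewriting this sheaf as $\Omega^i_{S|\PP^1} \otimes \Oo_S(dH - F)$ with $d := i + 1 - p$, the Leray spectral sequence yields
\[
H^j\bigl(S, \Omega^i_{S|\PP^1}(dH - F)\bigr) = \bigoplus_{r+s=j} H^s\bigl(\PP^1, R^r\pi_*(\Omega^i_{S|\PP^1}(dH)) \otimes \Oo_{\PP^1}(-1)\bigr).
\]

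The key input is a computation of $R^r\pi_*(\Omega^i_{S|\PP^1}(dH))$ via the relative Bott formula on the $\PP^n$-fibers. As $p$ ranges over $\{1, \ldots, n+1\}$, $d$ ranges over $\{i-n, \ldots, i\}$; fiberwise Bott gives $H^q(\Omega^i_{\PP^n}(d)) = 0$ for all $q$ whenever $d \in \{i-n, \ldots, i\}$ and $d \ne 0$, so in those cases the pushforward vanishes identically. For the single remaining case $d = 0$ (i.e.\ $p = i+1$), iterating the Koszul resolution derived from the relative Euler sequence (\ref{eq1}) --- namely $0 \to \Omega^k_{S|\PP^1} \to \wedge^k\Bb \otimes \Oo_S(-kH) \to \Omega^{k-1}_{S|\PP^1} \to 0$ for $k = 1, \ldots, i$ --- and applying $\pi_*$ together with the vanishings $R^\bullet\pi_*\Oo_S(-kH) = 0$ for $1 \le k \le n$, one obtains $R^i\pi_*\Omega^i_{S|\PP^1} \cong \pi_*\Oo_S = \Oo_{\PP^1}$. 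The remaining spectral sequence piece is then $H^{j-i}(\PP^1, \Oo_{\PP^1}(-1)) = 0$, completing the Ulrich verification.

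Finally, the slope $\mu = c - 1$ follows from a Chern class calculation: using $c_1(\Omega^1_{S|\PP^1}) = -(n+1)H + cF$, the identity $c_1(\wedge^i E) = \binom{r-1}{i-1}c_1(E)$ for a rank-$r$ bundle $E$, the intersection numbers $H^{n+1} = c$ and $H^n \cdot F = 1$, and the combinatorial identity $n\binom{n-1}{i-1} = i\binom{n}{i}$, one gets $c_1(\Omega^i_{S|\PP^1}(i, i+1)) \cdot H^n = (c-1)\binom{n}{i}$, hence $\mu = c - 1$. The main obstacle is the relative Bott identification at $d = 0$ --- explicitly, proving $R^i\pi_*\Omega^i_{S|\PP^1} \cong \Oo_{\PP^1}$ rather than some twisted line bundle --- which requires careful tracking through the iterated Koszul pushforward.
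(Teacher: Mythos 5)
Your proof is correct, and it takes a genuinely different route from the paper's. The paper works with the truncated Koszul resolution $0\to\Oo_S(c-n,-n)\to\wedge^n\Bb(-n+1,-n+1)\to\cdots\to\wedge^{i+1}\Bb(-i,-i)\to\Omega^i_{S|\PP^1}\to 0$ (the ``upper half'' of the Koszul complex), twists it and chases the vanishing of intermediate cohomology term by term using Lemma \ref{lem}, and then separately counts $h^0(\Omega^i_{S|\PP^1}(i,i+1))=c\binom{n}{i}$ from the same resolution to conclude Ulrichness. You instead invoke the Eisenbud--Schreyer twisted-vanishing criterion, push down to $\PP^1$ via Leray (which degenerates since the base is a curve), and kill all but one twist by fiberwise Bott vanishing together with Grauert; the single surviving case $d=0$ is handled by running the ``lower half'' of the Koszul complex through $\pi_*$ to get $R^i\pi_*\Omega^i_{S|\PP^1}\cong\pi_*\Oo_S\cong\Oo_{\PP^1}$, whence $H^\bullet(\PP^1,\Oo_{\PP^1}(-1))=0$ finishes the argument. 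Both proofs ultimately rest on the same two ingredients (the relative Koszul complex and the computation of cohomology on $S$ through $\pi$), but yours isolates the one nontrivial pushforward and disposes of everything else uniformly, and it delivers the Ulrich condition in one stroke without a separate $h^0$ count; the paper's version is a more elementary diagram chase that avoids base change and Bott. Two reassurances on the points you flag: the untwisted sequences $0\to\Omega^k_{S|\PP^1}\to\wedge^k\Bb(-kH)\to\Omega^{k-1}_{S|\PP^1}\to 0$ are indeed the correct normalization of the graded pieces of the Koszul complex (they are the ones compatible with (\ref{eq1}) and with the twists appearing in (\ref{eqq1}), even though the quotient term of (\ref{eqqq1}) is printed with a different twist), and since $R^\bullet\pi_*\Oo_S(-kH)=0$ for $1\le k\le n$ they give $R^{r+1}\pi_*\Omega^k_{S|\PP^1}\cong R^{r}\pi_*\Omega^{k-1}_{S|\PP^1}$, so the identification $R^i\pi_*\Omega^i_{S|\PP^1}\cong\Oo_{\PP^1}$ does go through; your slope computation also checks out against the paper's (which is left to the reader).
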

\begin{proof}
Consider the twist of the following truncation of (\ref{eqq1}) by $\Oo_S(i-1, i)$
\begin{equation}\label{hhy}
\begin{split}
0\to \Oo_S(c-n,-n) \to& \wedge^n \Bb(-n+1, -n+1) \to \cdots \\ &\to \wedge^{i+1}\Bb(-i, -i) \to \Omega_{S|\PP^1}^i \to 0.
\end{split}
\end{equation}
Applying Lemma \ref{lem} to (\ref{hhy}), we obtain that for any $k\in \ZZ$ and $j\in \{1,\cdots, n\}$
\begin{align*}
0=&h^{n-i+j}(\Oo_S(c-n+i+k-1, c-n+i+k))\\
=&h^{n-i-1+j}(\wedge^n \Bb (k-n+i,k-n+i+1))\\
=&\cdots = h^j(\wedge^{i+1}\Bb (k-1,k)),
\end{align*}
and it implies that $h^j(\Omega_{S|\PP^1}^i(k+i-1, k+i))=0$. Thus the sheaf $\Omega_{S|\PP^1}^i(i-1,i)$ is aCM and hence $\Omega_{S|\PP^1}^i(i,i+1)$ is also aCM. Now we may apply the same argument above to show
$$H^0(\wedge^{i+1}\Bb(0,1)) \cong H^0(\Omega_{S|\PP^1}^i(i,i+1))$$
whose dimension is $c{\times}{n \choose i} =\deg (S){\times} \mathrm{rank}(\Omega_{S|\PP^1}^i(i,i+1))$, and so the sheaf $\Omega_{S|\PP^1}^i(i,i+1)$ is Ulrich.

The slope computation is straightforward.
\end{proof}

\begin{remark}
In practice, instead of working with the Ulrich bundles $\Omega_{S|\PP^1}^i(i,i+1)$, we shall work with their twists by $\Oo_S(-H)$ i.e. with $\Omega_{S|\PP^1}^i(i-1,i)$. The reason will become clear in the main Theorem. The spectral sequence will be much simpler if we use bundles with no sections. The bundles $\Omega_{S|\PP^1}^i(i,i+1)$ will appear to be the building blocks for all the Ulrich bundles on $S$.
\end{remark}

%

\begin{lemma}\label{llem}
For any $i,j\in \{0,\ldots, n\}$ with $i\ne j$, we have
\begin{equation}
\mathrm{Hom}(\Omega_{S|\PP^1}^i (i,i), \Omega_{S|\PP^1}^j (j,j))=0.
\end{equation}
%
\end{lemma}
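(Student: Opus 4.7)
Write $\bar\Omega^i := \Omega^i_{S|\PP^1}(i,i)$. The strategy is to apply $\Hom(\bar\Omega^i,-)$ to the short exact sequence (\ref{eqqq1}) associated to $\bar\Omega^j$ (for $j\geq 1$),
$$0 \to \bar\Omega^j \to \wedge^j\Bb \to \bar\Omega^{j-1}(H) \to 0,$$
producing the inclusion
$$\Hom(\bar\Omega^i,\bar\Omega^j) \hookrightarrow \Hom(\bar\Omega^i, \wedge^j \Bb) = \bigoplus_{|J|=j} H^0\bigl((\bar\Omega^i)^\vee \otimes \Oo_S(a_J F)\bigr),$$
and then to verify the vanishing of each summand on the right. (The edge case $j=0$ reduces to $\Hom(\bar\Omega^i,\Oo_S)=H^0((\bar\Omega^i)^\vee)$, handled by the same computation.)

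To handle the summands, I would use the natural identification $(\Omega^i_{S|\PP^1})^\vee\cong\Omega^{n-i}_{S|\PP^1}\otimes\omega_{S|\PP^1}^{-1} = \Omega^{n-i}_{S|\PP^1}\otimes \Oo_S((n+1)H-cF)$, rewriting
$$(\bar\Omega^i)^\vee \otimes \Oo_S(a_J F) \cong \Omega^{n-i}_{S|\PP^1}\otimes \Oo_S\bigl((n+1-i)H + (a_J-c)F\bigr).$$
Since every $a_k\geq 1$ and $|J^{c}|=n+1-j$, we have $c-a_J \geq n+1-j$, so the $F$-coefficient $a_J-c$ is strictly negative. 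Applying the Koszul co-resolution of $\Omega^{n-i}_{S|\PP^1}$ derived from (\ref{eqq1}) reduces each $H^0$ to cohomologies of explicit line bundles of the shape $\Oo_S(\alpha,\beta)$ on $S$, which one computes via Lemma \ref{lem}; in the relevant range these fall into the negative-$\beta$ region covered by Lemma \ref{lem}(ii) or give sums of $H^0(\PP^1,\Oo_{\PP^1}(d))$ with $d<0$ via Lemma \ref{lem}(i), hence vanish.

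\emph{Main obstacle.} The bookkeeping across all multi-indices $J$ and both directions $i>j$, $i<j$ is delicate; in particular, one must track the hypercohomology spectral sequence associated to the Koszul co-resolution and verify that no surplus contributions survive. An appealing shortcut in the case $i<j$ is to pass to the fibers of $\pi\colon S\to\PP^1$: the restriction $\bar\Omega^i|_{F_t}=\Omega^i_{\PP^n}(i)$ gives $\Hom_{\PP^n}(\Omega^i_{\PP^n}(i),\Omega^j_{\PP^n}(j))=0$ for $i<j$ by Bott's formula (these bundles form Beilinson's building blocks on $\PP^n$), and cohomology and base change yields $\pi_*\mathcal Hom(\bar\Omega^i,\bar\Omega^j)=0$, hence $\Hom_S(\bar\Omega^i,\bar\Omega^j)=0$; however, for $i>j$ the fiberwise Hom is generically nonzero, so the global argument using the $F$-direction twist is genuinely needed there.
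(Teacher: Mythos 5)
Your treatment of the case $i<j$ is fine and is in fact exactly what the paper does: restrict to a fibre $F_x\cong\PP^n$, use $\Hom_{\PP^n}(\Omega^i_{\PP^n}(i),\Omega^j_{\PP^n}(j))=0$ for $i<j$, and conclude $\pi_*\mathcal{H}om=0$ by Grauert/base change. The problem is the main case $i>j$, where your very first reduction already fails. The inclusion $\Hom(\bar\Omega^i,\bar\Omega^j)\hookrightarrow\Hom(\bar\Omega^i,\wedge^j\Bb)$ is of course valid, but the right-hand side is \emph{not} zero for unbalanced scrolls, so no amount of bookkeeping with the Koszul co-resolution can rescue the argument. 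Concretely, take $S=S(1,1,3)$ (so $n=2$, $c=5$), $i=2$, $j=1$. Then $\bar\Omega^2=\omega_{S|\PP^1}(2H)=\Oo_S(-H+5F)$ and $\wedge^1\Bb$ contains the summand $\Oo_S(3F)$, whence
$$\Hom(\bar\Omega^2,\wedge^1\Bb)\supseteq H^0(\Oo_S(H-2F))\cong H^0(\PP^1,\Ee(-2))\cong H^0(\Oo_{\PP^1}(-1)^{\oplus 2}\oplus\Oo_{\PP^1}(1))\cong\CC^2\neq 0.$$
Your slope estimate only shows that the $F$-coefficient $a_J-c$ is negative; but with a positive $H$-coefficient the relevant group is $H^0(\PP^1,\mathrm{Sym}^a\Ee\otimes\Oo_{\PP^1}(b))$ by Lemma \ref{lem}(i), which survives a negative $b$ whenever some $a_k$ is large. (Lemma \ref{lem}(ii) concerns a negative $H$-coefficient, which never occurs here.) In this example $\Hom(\bar\Omega^2,\bar\Omega^1)$ is indeed zero, but only because it is the kernel of an injective map $\CC^2\to\CC^2$ into $\Hom(\bar\Omega^2,\bar\Omega^0(H))$ --- so you would be forced to analyze the actual connecting maps, not just the ambient cohomology groups, and that is the missing idea.

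The paper handles $i>j$ by a relative version of the classical $\PP^n$ computation rather than by a resolution of the target: the Koszul complex furnishes a natural morphism $\alpha:\wedge^{i-j}\Bb^\vee\to\mathcal{H}om(\bar\Omega^i,\bar\Omega^j)$ which, on each fibre, induces the standard isomorphism $H^0(\PP^n,\wedge^{i-j}V^\vee\otimes\Oo)\cong\Hom_{\PP^n}(\Omega^i(i),\Omega^j(j))$; by Grauert, $\pi_*\alpha$ is then an isomorphism of vector bundles on $\PP^1$, and the vanishing follows because $\wedge^{i-j}\Bb^\vee$ (a sum of $\Oo_S(-a_JF)$ with $a_J>0$) has no global sections. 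Note the asymmetry: for $i>j$ the fibrewise $\Hom$ is nonzero, so one must compare the pushforward with an explicit bundle on $\PP^1$ whose sections vanish, which is precisely what your proposal does not do.
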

\begin{proof} We prove the vanishing for $i>j$ first. Denote by
\[
\mathcal H:=\mathcal{H}\emph{om}(\Omega_{S|\PP^1}^i (i,i), \Omega_{S|\PP^1}^j (j,j))\cong
\Omega_{S|\PP^1}^j\otimes (\Omega_{S|\PP^1}^i )^\vee (j-i,j-i).
\]
Similarly to the projective space case in \cite[Section 0.1]{NT}, the Koszul complex provides us with a natural sheaf morphism
\[
\alpha:\wedge^{i-j}\Bb^\vee\to \mathcal H.
\]

Indeed, by dualizing (\ref{eqqq1}) we obtain a short exact sequence:
\[
0\to (\Omega_{S|\PP^1}^{i-j-1})^\vee (j-i-1,j-i-1)\to\wedge^{i-j}\Bb^\vee\to (\Omega_{S|\PP^1}^{i-j})^\vee (j-i,j-i)\to 0
\]
and $\alpha$ is obtained from the surjection $\wedge^{i-j}\Bb^\vee\to (\Omega_{S|\PP^1}^{i-j})^\vee (i-j,i-j)$ and from the natural map $\Omega_{S|\PP^1}^i (i,i)\otimes (\Omega_{S|\PP^1}^{i-j})^\vee (j-i,j-i)\to \Omega_{S|\PP^1}^j (j,j)$.
For each point $x\in\mathbb P^1$, denote by $F_x$ the fibre over $x$. After restricting to $F_x$ and taking global sections, the map $\alpha$ induces the following usual natural isomorphism over the $n$--dimensional projective space; see \cite[Section 0.1]{NT}.
\begin{equation}
\label{eqn:isoPn}
H^0(F_x,(\wedge^{i-j}{\Bb^\vee})_{|F_x}) \cong \mathrm{Hom}_{F_x}(\Omega_{F_x}^i (i), \Omega_{F_x}^j (j))\cong  H^0(F_x,\mathcal H_{|{F_x}}).
\end{equation}

It follows by the Grauert theorem that $\pi_*(\wedge^{i-j}\Bb^\vee)$ and $\pi_*(\mathcal H)$ are vector bundles of the same rank. The morphism $\alpha$ induces a vector bundle morphism from $\pi_*(\wedge^{i-j}\Bb^\vee)$ to $\pi_*(\mathcal H)$ which is, by (\ref{eqn:isoPn}) and Grauert,  an isomorphism on each fibre (of the vector bundle), and hence it is an isomorphism. Therefore, we obtain an isomorphism
\[
H^0(\mathbb P^1,\pi_*(\wedge^{i-j}\Bb^\vee))\cong H^0(\mathbb P^1,\pi_*(\mathcal H))
\]
and, since $\wedge^{i-j}\Bb^\vee$ has no nontrivial global sections, it follows that $\mathcal H$ has no nonzero global sections.


In the case $i<j$, we use the well-known fact that on the projective space $\mathbb P^n$ there are no nontrivial morphisms from $\Omega^i_{\mathbb P^n}(i)$ to $\Omega^j_{\mathbb P^n}(j)$ for $i<j$. Then
\[
\mathrm{Hom}_{F_x}(\Omega_{F_x}^i (i), \Omega_{F_x}^j (j)=0
\]
and, by Grauert's Theorem, it follows that $\pi_*(\mathcal H)=0$ and hence
\[
\mathrm{Hom}(\Omega_{S|\PP^1}^i (i,i), \Omega_{S|\PP^1}^j (j,j))=0.
\]
\end{proof}

\begin{remark}
Although the sheaf $\Omega_{S|\PP^1}^i (i,i)$ is stable, as we shall see, it is not possible to use in the proof of Lemma \ref{llem} the fact that there is no nontrivial morphism from a stable sheaf to another stable sheaf with smaller slope, because the slope of $\Omega_{S|\PP^1}^i(i,i)$ with respect to the ample line bundle $\Oo_S(1,1)$ is zero for any $i$.
\end{remark}

Now we introduce suitable full exceptional collection; \cite[Corollary 2.7]{Orlov}, see also \cite{AB}.

\begin{example}\label{fec}
Consider a function $\sigma : \ZZ \rightarrow \ZZ^{\oplus 2}$ sending $i$ to the pair of integers $(\sigma_1,\sigma_2)$ such that $\sigma_1+\sigma_2=i$ and $\sigma_1-\sigma_2\in \{0,1\}$. For each integer $i\in \{0,\ldots, 2n+1\}$, define $\Ee_i'=\Oo_S(\sigma (-i+1))$ and $k_i=\sigma_2(-i+1)$ to have a collection 
$$\langle \Ee_{2n+1}'[k_{2n+1}], \ldots, \Ee_0'[k_0]\rangle.$$
For example, we have $\Ee'_{2n+1} [k_{2n+1}]=\Oo_S(-n,-n)[-n]$. Note that this collection is full for $D^b(S)$. Apply the left mutation to the pair $(\Ee'_1[k_1], \Ee'_0[k_0])=(\Oo_S, \Oo_S(1,0))$ to obtain another full exceptional collection
\begin{equation}
\langle \Ee_{2n+1}[k_{2n+1}], \ldots, \Ee_0[k_0] \rangle,
\end{equation}
where $\Ee_1=\Oo_S(-1,0)$ and $\Ee_0=\Oo_S$. For example, when $n=3$, the full exceptional collection $\Ee_{\bullet}[k_{\bullet}]$ consists of the following:
\begin{align*}
&\Ee_7[k_7]=\Oo_S(-3,-3)[-3],~ \Ee_6[k_6]=\Oo_S(-2,-3)[-3],~ \Ee_5[k_5]=\Oo_S(-2,-2)[-2], \\
&\Ee_4[k_4]=\Oo_S(-1,-2)[-2],~\Ee_3[k_3]=\Oo_S(-1,-1)[-1],~ \Ee_2[k_2]=\Oo_S(0,-1)[-1], \\
&\Ee_1[k_1]=\Oo_S(-1,0)~,~ \Ee_0[k_0]=\Oo_S.
\end{align*}
Indeed, the collection above is obtained by taking the dual in reverse order of the full exceptional collection $E_{\bullet}=\langle E_0, \ldots, E_{2n+1} \rangle$;
$$E_{\bullet}=\langle \Oo_S,~ \Oo_S(1,0),~ \Oo_S(0,1)[1],~ \Oo_S(1,1)[1],~ \Oo_S(1,2)[2],~ \ldots ,~ \Oo_S(n,n)[n]    \rangle,$$
which is also obtained by applying a right mutation to the first pair in the following full collection 
$$\langle \Oo_S(-1,0),~\Oo_S,~ \Oo_S(0,1)[1],~ \Oo_S(1,1)[1],~ \Oo_S(1,2)[2],~ \ldots ,~ \Oo_S(n,n)[n]    \rangle.$$
Now let $\langle F_{2n+1}, \ldots, F_0\rangle$ be the right dual collection of $E_{\bullet}$. Indeed it consists of the following vector bundles $F_i=\Ff_i$ for each $i$ due to (\ref{order})
\begin{align*}
&\Ff_{2n+1}=\Oo_S(c-3,-1)~,~ \Ff_{2n}=\Oo_S(c-2,-1), \\
&\Ff_{2n-1}=\Omega_{S|\PP^1}^{n-1}(n-3,n-1)~,~ \Ff_{2n-2}=\Omega_{S|\PP^1}^{n-1}(n-2,n-1), \dots ,\\
&\Ff_3=\Omega_{S|\PP^1}^1(-1,1)~,~ \Ff_2=\Omega_{S|\PP^1}^1(0,1)~,~ \Ff_1=\Oo_S(-1,0)~,~ \Ff_0=\Oo_S.
\end{align*}
For example, for $i=2n+1$, we have $H^{\bullet} (\Ee_{2n+1}[k_{2n+1}]\otimes \Ff_{2n+1}) \cong H^{2n+1}(\omega_S[-n])\cong \CC$.
\end{example}

\begin{theorem}\label{volon}
Let $\Vv$ be a vector bundle on $S$. Then $\Vv$ is Ulrich if and only if $\Vv$ admits a filtration
$$0=\Bb_0\subset \Bb_1\subset\cdots\subset\Bb_{n+1}=\Vv$$
with $\Bb_{i+1}/\Bb_i\cong \Omega_{S|\PP^1}^i(i,i+1)^{\oplus a_i}$ for some $a_i \in \NN$ and each $i$. Moreover, the bundles $\Omega_{S|\PP^1}^i$ are stable and the graded sheaf $\bigoplus_{i=0}^{n}\Omega_{S|\PP^1}^i(i,i+1)^{\oplus a_i}$ associated to this filtration on $\Vv$ coincides, up to a permutation of its factors, with the graded sheaf of the Jordan-H\"older filtration of $\Vv$.
\end{theorem}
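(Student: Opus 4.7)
My plan has three parts: the ``if'' direction via an extension argument, the ``only if'' direction via the Beilinson spectral sequence applied to $\Vv(-H)$, and the Jordan-H\"older statement via stability of the building blocks.

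For the ``if'' direction, both the Ulrich vanishings $H^i(\Vv(-kH)) = 0$ for $1 \leq k \leq n+1$ and the numerical condition $h^0(\Vv) = \deg(S) \cdot \mathrm{rank}(\Vv)$ are additive in short exact sequences. Combined with Lemma \ref{bb} showing that each $\Omega^i_{S|\PP^1}(i, i+1)$ is Ulrich, induction on the length of the filtration gives the conclusion.

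For the ``only if'' direction, the trick is to apply Theorem \ref{use} to $\Vv(-H)$ rather than to $\Vv$ itself, so that the graded pieces of the resulting filtration appear as $\Ff_k$'s that twist to Ulrich building blocks under $\otimes \Oo_S(H)$. Several $E_1$-entries vanish immediately: $E_1^{0, q} = H^q(\Vv(-H)) \otimes \Oo_S = 0$ by the aCM condition on $\Vv$, and $E_1^{-(2k+1), q} = H^q(\Vv(-(k+1)H)) \otimes \Ff_{2k+1} = 0$ for $k = 1, \ldots, n$ by the Ulrich vanishing for the pure $H$-shifts $\Ee_{2k+1} = \Oo_S(-kH)$. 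For the remaining columns $-p = 1$ and $-p = 2k$ with $k = 1, \ldots, n$, the goal is to show that $H^{q + k_{-p}}(\Vv(-H) \otimes \Ee_{-p})$ is non-zero only at the degree that puts the entry on the anti-diagonal $p + q = 0$. This step requires Lemma \ref{lem} (the explicit cohomology of line bundles on the scroll) together with pushforward analysis via $\pi \colon S \to \PP^1$ and the projection formula. Once this concentration holds, $E_1 = E_\infty$, and the spectral sequence produces a filtration of $\Vv(-H)$ with graded pieces $\Ff_1^{\oplus a_0}$ and $\Ff_{2k}^{\oplus a_k}$ for $k = 1, \ldots, n$. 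Since the explicit descriptions in Example \ref{fec} give $\Ff_1 \otimes \Oo_S(H) = \Omega^0(0, 1)$ and $\Ff_{2k} \otimes \Oo_S(H) = \Omega^k(k, k+1)$, twisting the filtration by $H$ yields the desired filtration of $\Vv$ with Ulrich graded pieces.

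For the Jordan-H\"older statement, stability of $\Omega^i_{S|\PP^1}$ with respect to $H$ should follow by restricting to a general fiber $F_x \cong \PP^n$, where $\Omega^i_{\PP^n}$ is classically stable, and transferring stability to the total space via an openness/semicontinuity argument---the Remark following Lemma \ref{llem} warns that slope inequalities alone do not suffice, since $\Omega^i_{S|\PP^1}(i, i)$ has $H$-slope zero. Once stability is established, all Ulrich building blocks $\Omega^i(i, i+1)$ share the same $H$-slope $c - 1$ by Lemma \ref{bb}, so the associated graded sheaf $\bigoplus_i \Omega^i(i, i+1)^{\oplus a_i}$ is polystable, and uniqueness of Jordan-H\"older identifies it (up to permutation) with the graded of $\Vv$. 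The main obstacle will be the cohomology concentration for the mixed $F$-shifts: proving that the relevant entries live only on the anti-diagonal requires combining Lemma \ref{lem} with pushforward arguments on $S \to \PP^1$ that go beyond the direct Ulrich vanishing.
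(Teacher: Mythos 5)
Your skeleton agrees with the paper's: the ``if'' direction by additivity of the Ulrich conditions over the filtration, the ``only if'' direction by applying the spectral sequence of Theorem \ref{use} to $\Aa=\Vv(-1,-1)$ with the collection of Example \ref{fec}, and the correct identification of the twisted $\Ff_k$'s with the building blocks $\Omega_{S|\PP^1}^i(i,i+1)$. However, the two steps you yourself single out as the hard ones are handled by the paper in a way essentially different from what you propose, and your proposed tools would not suffice. For the concentration of the $E_1$-page on the anti-diagonal: you cannot compute $H^{q+k_{-p}}(\Aa\otimes\Ee_{-p})$ directly for the mixed twists (e.g.\ $\Ee_2=\Oo_S(0,-1)$ gives groups of the form $H^q(\Vv(-2H+F))$), because these are cohomology groups of the \emph{unknown} bundle $\Vv$ twisted by line bundles that are not multiples of $H$; the Ulrich hypothesis only kills $H^\bullet(\Vv(-jH))$, and Lemma \ref{lem} computes cohomology of line bundles, not of $\Vv$ tensored with them. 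Indeed the paper's first table records whole columns of a priori nonvanishing groups $H^1,\dots,H^n$ in exactly these positions. The actual mechanism is Lemma \ref{ttt} (resting on Lemma \ref{llem}): all morphisms $\Ff_i\to\Ff_j$ for $i>j$ among the relevant members of the dual collection vanish, hence every differential of the spectral sequence vanishes, so $E_1=E_\infty$; since $E_\infty$ is concentrated on $p+q=0$, the off-diagonal $E_1$-entries are forced to be zero. The concentration is an output of convergence, not an input you compute.

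The stability step is also a genuine gap as proposed. Restriction to a general fibre $F_x\cong\PP^n$ cannot transfer stability back to $S$: a fibre is not an ample divisor, so no restriction theorem applies, and concretely $\Oo_S(aH+bF)|_{F_x}\cong\Oo_{\PP^n}(a)$ forgets the coefficient $b$ while the $H$-slope on $S$ sees $ac+b$, so a destabilizing subsheaf of $\Omega_{S|\PP^1}^i$ need not destabilize the restriction to a fibre. The paper instead bootstraps from the classification just established: if $\Omega_{S|\PP^1}^i$ were not stable, the semistable Ulrich bundle $\Omega_{S|\PP^1}^i(i,i+1)$ would contain a stable Ulrich subbundle $\mathcal F$ of the same slope $c-1$; applying the first part of the theorem to $\mathcal F$ and using stability forces $\mathcal F\cong\Omega_{S|\PP^1}^j(j,j+1)$ for some $j$, and the resulting nonzero morphism into $\Omega_{S|\PP^1}^i(i,i+1)$ contradicts Lemma \ref{llem} unless $j=i$. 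Both missing ingredients thus come from the same Hom-vanishing Lemma \ref{llem}, which your outline does not invoke at any point.
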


\begin{proof}
We have verified that each of the bundles $\Omega_{S|\PP^1}^i(i,i+1)$ is Ulrich and so the converse implication is clear.

Now we prove the direct implication. The idea is to consider the Beilinson type spectral sequence associated to $\Aa:=\Vv(-1,-1)$ and identify the members of the graded sheaf associated to the induced filtration as the sheaves mentioned in the statement. We assume due to \cite[Proposition 2.1]{ES} that
$$H^i(\Aa(-j,-j))=0 \text{ for all $0\le i \le n+1$ and $0\le j \le n$}$$
and consider the full exceptional collection $\Ee_{\bullet}$ and left dual collection $\Ff_{\bullet}$ in Example \ref{fec}. We construct a Beilinson complex, quasi-isomorphic to $\Aa$, by calculating $H^{i+k_j}(\Aa\otimes \Ee_j)\otimes \Ff_j$ with  $i,j \in \{1, \ldots, 2n+2\}$ to get the following table. Here we use several vanishing in the intermediate cohomology of $\Aa, \Aa(-1,-1),\cdots,  \Aa(-n,-n)$ together with vanishing of cohomology $H^0$ and $H^{n+1}$:

\begin{center}\begin{tabular}{|c|c|c|c|c|c|c|c|}
\hline
$\Ff_{2n+1}$ &$\Ff_{2n}$&$\Ff_{2n-1}$& $\Ff_{2n-2}$& \dots
&$\Ff_2$&$\Ff_1$ &$\Ff_0$\\
\hline
\hline
$0$        &$0$        &$0$         &	 $0$	 	& \dots		 	& $0$		 	& $0$			& $0$	\\
$0$        &$H^{n}$    &$0$		    &	 $0$	& \dots		 	& $0$		 	& $0$			& $0$	\\
$0$        &$H^{n-1}$  &$0$	        &	 $H^n$	 	& \dots		    & $0$		    & $0$			& $0$	\\
$0$        &$H^{n-2}$  &$0$	        &	 $H^{n-1}$	& \dots		 	& $0$		 	& $0$			& $0$	\\
\vdots     &\vdots     &\vdots	    &	 \vdots	    & \vdots		& \vdots		& 	\vdots		& \vdots	\\
$0$        &$0$        &$0$		    &	 $H^1$	 	& \dots	 	    & $H^n$		 	& $0$		& $0$	\\
$0$        &$0$        &$0$		    &	 $0$	 	& \dots	 	    & $H^{n-1}$		 	& $H^n$			& $0$	\\
$0$        &$0$        &$0$		    &	 $0$	 	& \dots	 	    & $H^{n-2}$		 	& $H^{n-1}$		& $0$	\\
\vdots     &\vdots     &\vdots	    &	 \vdots	    & \vdots		& \vdots		& 	\vdots		& \vdots	\\
$0$        &$0$        &$0$		    &	 $0$	 	& \dots		 	& $H^1$		 	& $H^2$			& $0$	\\
$0$        &$0$        &$0$		    &	 $0$	 	& \dots		 	& $0$		 	& $H^1$			& $0$	\\
$0$        &$0$        &$0$		    &	 $0$	 	& \dots		 	& $0$		 	& $0$			& $0$	\\

\hline
\hline
$\Ee_{2n+1}$ &$\Ee_{2n}$ &$\Ee_{2n-1}$& $\Ee_{2n-2}$ & \dots
 &$\Ee_2$& $\Ee_1$ &$\Ee_0$\\
 \hline
\end{tabular}
\end{center}

\begin{lemma}\label{ttt}
For any $i,j \in \{1\} \cup \{2,4,\ldots, 2n\}$ with $i>j$, we have
$$\mathrm{Hom}(\Ff_i, \Ff_j)=0.$$
\end{lemma}
\begin{proof}
By Lemma \ref{llem}, it is enough to consider the case when $i=2n$ and in this case we have $\mathrm{Hom}(\Ff_{2n}, \Ff_j) \cong H^0(\Ff_j (2-c,1))$. If $j=1$, then it is isomorphic to $H^0(\Oo_S(1-c,1)) \cong H^0(\PP^1, \Ee \otimes \Oo_{\PP^1}(-c))$ by Lemma \ref{lem} and it is trivial. For $j>1$, we need to show that $H^0(\Omega_{S|\PP^1}^i (i+1-c, i+1))=0$. Consider the twist of (\ref{hhy}) by $\Oo_S(i+1-c, i+1)$ and apply Lemma \ref{lem} to get that
\begin{align*}
h^{n-i}(\Oo_S(i+1-n, i+1-n)&=h^{n-i-1}(\wedge^n \Bb \otimes \Oo_S(i+2-c-n, i+2-n))\\
&=\cdots = h^0(\wedge^{i+1}\Bb \otimes \Oo_S(1-c,1))=0
\end{align*}
and it implies the vanishing we want.
\end{proof}

\noindent By Lemma \ref{ttt}, we may conclude that all the entries off the diagonal must be zero and thus we get

\begin{center}\begin{tabular}{|c|c|c|c|c|c|c|c|}
\hline
$\Ff_{2n+1}$ &$\Ff_{2n}$&$\Ff_{2n-1}$& $\Ff_{2n-2}$& \dots
&$\Ff_2$&$\Ff_1$ &$\Ff_0$\\
\hline
\hline

$0$        &$0$        &$0$         &	 $0$	 	& \dots		 	& $0$		 	& $0$			& $0$	\\
$0$        &$b$    &$0$		    &	 $0$	& \dots		 	& $0$		 	& $0$			& $0$	\\
$0$        &$0$  &$0$	        &	 $0$	 	& \dots		    & $0$		    & $0$			& $0$	\\
$0$        &$0$  &$0$	        &	 $H^{n-1}$	& \dots		 	& $0$		 	& $0$			& $0$	\\
\vdots     &\vdots     &\vdots	    &	 \vdots	    & \vdots		& \vdots		& 	\vdots		& \vdots	\\
$0$        &$0$        &$0$		    &	 $0$	 	& \dots	 	    & $0$		 	& $0$		& $0$	\\
$0$        &$0$        &$0$		    &	 $0$	 	& \dots	 	    & $0$		 	& $0$			& $0$	\\
$0$        &$0$        &$0$		    &	 $0$	 	& \dots	 	    & $0$		 	& $0$		& $0$	\\
\vdots     &\vdots     &\vdots	    &	 \vdots	    & \vdots		& \vdots		& 	\vdots		& \vdots	\\
$0$        &$0$        &$0$		    &	 $0$	 	& \dots		 	& $H^1$		 	& $0$			& $0$	\\
$0$        &$0$        &$0$		    &	 $0$	 	& \dots		 	& $0$		 	& $a$			& $0$	\\
$0$        &$0$        &$0$		    &	 $0$	 	& \dots		 	& $0$		 	& $0$			& $0$	\\

\hline
\hline
$\Ee_{2n+1}$ &$\Ee_{2n}$ &$\Ee_{2n-1}$& $\Ee_{2n-2}$ & \dots
 &$\Ee_2$& $\Ee_1$ &$\Ee_0$\\
 \hline
\end{tabular}
\end{center}
where $a:=h^1(\Aa(-1,0))$ and $b:=h^n(\Aa(-n+1,-n))$. This yields to the desired filtration.

In order to verify the last assertion, it suffices to check the stability of the bundles $\Omega^i_{S|\mathbb P^1}$ for all $i$. Assume that $\Omega^i_{S|\mathbb P^1}$ was not stable for some $i$. Then, since $\Omega^i_{S|\mathbb P^1}(i,i+1)$ is Ulrich and so semistable, it will contain a stable Ulrich subbundle, say $\mathcal F$, with the same slope $c-1$. Applying the first part of the result, we obtain a filtration on $\mathcal F$ whose terms have all the same slope $c-1$, and, in particular, from the stability, the bundle $\mathcal F$ must be isomorphic to $\Omega^j_{S|\mathbb P^1}(j,j+1)$ for some $j$. Therefore we obtain an injective morphism from $\Omega^j_{S|\mathbb P^1}(j,j+1)$ to $\Omega^i_{S|\mathbb P^1}(i,i+1)$. Now it follows from Lemma \ref{llem} that $i=j$ and so $\mathcal F\cong \Omega^i_{S|\mathbb P^1}(i,i+1)$, which implies the stability of $\Omega^i_{S|\mathbb P^1}$.
\end{proof}

\begin{remark}\label{con}
A consequence of the main result is that the moduli spaces of Ulrich bundles are zero-dimensional. This follows from the fact that when we construct moduli spaces of semistable bundles, we work with $S$--equivalence classes.
\end{remark}

\begin{remark}
It follows directly from Theorem \ref{volon} that, for $n\ge 2$, the only stable Ulrich bundles of rank $\le n-1$ are $\Oo_S(0,1)$ and $\Oo_S(c-1,0)$. In particular, if $n\ge 3$, there are no stable Ulrich bundles of rank two on $S$ and all the strictly semistable Ulrich bundles of rank two are extensions
\[
0\to \Oo_S(0,1)\to \Vv\to \Oo_S(c-1,0)\to 0.
\]
In the case $n=2$ the only stable Ulrich bundle is $\Omega^1_{S|\mathbb P^1}(1,2)$.
\end{remark}

\begin{remark}
In surface case, the previous result is compatible with \cite[Theorem 2.1]{ACM}. Indeed, for the bundle $\Ee=\Oo_{\PP^1}(\beta)\oplus\Oo_{\PP^1}(\beta -e)$ on $\mathbb P^1$ with $\beta>e>0$, the only Ulrich line bundles are
$$\Oo_S(c-1,0)=\Oo_S((2\beta-e-1)F) \text{ and } \Oo_S(0,1)=\Oo_S(C_0+(\beta-1)F),$$
where $C_0$ is the minimal section.

All the other Ulrich bundles arise from extension of the form
$$
0\to \Oo_S(0,1)^{\oplus a}\to \Vv\to \Oo_S(c-1,0)^{\oplus b} \to 0
$$
and hence, there are no stable Ulrich bundles of rank $\ge 2$. This was previously proven in \cite{FM}.
\end{remark}

\begin{remark}
For $n>2$ we may compute
$$\Ext_S^1(\Omega_{S|\PP^1}^1(0,1),\Oo_S(-1,0))=H^1(\Omega_{S|\PP^1}^1(1,1)^\vee),$$
and from the dual of (\ref{eq1}) we get
$$h^1(\Omega_{S|\PP^1}^1(1,1)^\vee)=h^1(\Oo_S(-a_i,0))=0$$
if and only if $a_i=1$ for any $i$, so in this case the filtration can be simpler. Now take $(a_0, \ldots, a_n)=(1,\dots,1)$, i.e. $S=S(1,\dots,1)\cong \PP^1 \times \PP^n$. Then it is embedded into $\PP^{2n+1}$ by the complete linear system $|\Oo_S(1,1)|$ as a Segre variety. Then we have $\Omega_{S|\PP^1}^1 \cong \Oo_{\PP^1}\boxtimes \Omega_{\PP^n}^1$ and so $\Omega_{S|\PP^1}^i \cong \Oo_{\PP^1}\boxtimes \Omega_{\PP^n}^i$. Using the notation above, we get
$$
\Ext_S^1(\Omega_{S|\PP^1}^{i}(i-1,i),\Omega_{S|\PP^1}^j(j-1,j))=H^1(\Oo_{\PP^1}(j-i)\boxtimes \Omega_{\PP^n}^j(j)\otimes\Omega_{\PP^n}^{i}(i)^\vee)
$$
which is zero for each $i\le j+1$ and equals
\[
H^0(\Oo_{\mathbb P^1}(i-j-2))^\vee \otimes \wedge^{i-j}\mathbb C^{n+1}
\]
for $i\ge j+2$. 
\end{remark}

\begin{example}
On $S=\PP^1 \times \PP^2$, a bundle $\Vv$ is Ulrich if and only if it admits a filtration $0=\Bb_0\subset \Bb_1\subset\Bb_2\subset\Bb_{3}=\Vv$ with
$$\Bb_{1}\cong\Oo_S(0,1)^{\oplus a_0}~,~\Bb_{2}/\Bb_1\cong \Oo_{\PP^1}(1)\boxtimes \Omega_{\PP^2}^1(2)^{\oplus a_1}~,~\Bb_{3}/\Bb_2\cong \Oo_S(3,0)^{\oplus a_2}$$ 
for some $a_0,a_1,a_2 \in \NN$. So the only stable Ulrich bundle of rank $\ge 2$ is $\Vv\cong\Oo_{\PP^1}(1)\boxtimes \Omega_{\PP^2}^1(2)$ and all the other indecomposable Ulrich bundles arise from the extensions
 \begin{equation}\label{Exn}
0\to \Oo_S(0,1)^{\oplus a_0}\to \Vv\to \Oo_S(3,0)^{\oplus a_2} \to 0.
\end{equation}
This case has been studied in more details in \cite{CFMS}, where all the aCM bundles are classified.
\end{example}

\begin{example}
On $S=\PP^1 \times \PP^3$, a bundle $\Vv$ is Ulrich if and only if it admits a filtration $0=\Bb_0\subset \Bb_1\subset\Bb_2\subset\Bb_{3}\subset\Bb_4=\Vv$ with 
\begin{align*}
&\Bb_{1}\cong\Oo_S(0,1)^{\oplus a_0}~,~\Bb_{2}/\Bb_1\cong \Oo_{\PP^1}(1)\boxtimes \Omega_{\PP^3}^1(2)^{\oplus a_1},\\
&\Bb_{3}/\Bb_2\cong\Oo_{\PP^1}(2)\boxtimes \Omega_{\PP^3}^2(3)^{\oplus a_2}~,~\Bb_{4}/\Bb_3\cong \Oo_S(4,0)^{\oplus a_3}
\end{align*}
for some $a_0,a_1,a_2,a_3 \in \NN$. So all the Ulrich bundles arise from the extensions
$$
0\to \Oo_S(0,1)^{\oplus a_0}\oplus\Oo_{\PP^1}(1)\boxtimes \Omega_{\PP^3}^1(2)^{\oplus a_1}\to \Vv\to\Oo_{\PP^1}(2)\boxtimes \Omega_{\PP^3}^2(3)^{\oplus a_2}\oplus \Oo_S(4,0)^{\oplus a_3} \to 0.
$$

\end{example}




\providecommand{\bysame}{\leavevmode\hbox to3em{\hrulefill}\thinspace}
\providecommand{\MR}{\relax\ifhmode\unskip\space\fi MR }
\providecommand{\MRhref}[2]{%
  \href{http://www.ams.org/mathscinet-getitem?mr=#1}{#2}
}
\providecommand{\href}[2]{#2}


\begin{thebibliography}{10}




\bibitem{AB}
M.~Aprodu and V.~Br\^\i nz\u anescu, \emph{Beilinson type spectral sequences on scrolls}, Moduli spaces and vector bundles, London Math.  Soc.
Lecture Note Series, 359, Cambridge Univ. Press, Cambridge, (2009)  426--436.


\bibitem{ACM}
M.~Aprodu, L.~Costa and R.~M. Mir\'o-Roig, \emph{Ulrich bundles on ruled surfaces}, arXiv:1609.08340v1 [math.AG], to appear in J. Pure Appl. Alg.

\bibitem{AO}
V.~Ancona and G.~Ottaviani, \emph{Some applications of {B}eilinson's theorem to projective
              spaces and quadrics}, Forum Math. \textbf{3} (1991), 157--176.

\bibitem{BO}
C.~B\"{o}hning, \emph{Derived categories of coherent sheaves on rational homogeneous manifold}. Doc. Math. \textbf{11} (2006), 261--331.

\bibitem{CH}
M.~Casanellas and R.~Hartshorne (with an appendix by F.~Geiss and F.~-O. Schreyer), \emph{Stable Ulrich bundles}, Int. J. Math. \textbf{23} (2012), no.~8, 1250083.

\bibitem{CFMS}
G.~Casnati, D.~Faenzi, F.~Malaspina and G.~Sanna, \emph{Smooth projective varieties with only a finite number of ACM bundles which are not Ulrich},  in preparation.

\bibitem{CCHMW}
I.~Coskun, L.~Costa, J.~Huizenga, R.~M. Mir\'o-Roig and M.~Woolf, \emph{Ulrich Schur bundles on flag varieties}, J. Algebra \textbf{474} (2017), 49--96.

\bibitem{CKM}
E.~Coskun, R.~S. Kulkarni and Y.~Mustopa, \emph{The geometry of Ulrich bundles on del Pezzo surfaces}, J. Algebra \textbf{375} (2013), 280--301.

\bibitem{CM1}
L.~Costa and R.~M. Mir\'o-Roig, \emph{$\mathrm{GL}(V)$-invariant Ulrich bundles on Grassmannian}, Math. Ann. \textbf{361} (2014), no.~1--2, 443--457.

\bibitem{CM2}
L.~Costa and R.~M. Mir\'o-Roig, \emph{Homogeneous ACM bundles on a Grassmannian}, Adv. Math. \textbf{289} (2016), 95--113. 

\bibitem{EH}
D.~Eisenbud, J.~Harris,
\emph{On varieties of minimal degree (a centennial account)}. Algebraic geometry, Bowdoin, 1985 (Brunswick, Maine, 1985). Proceedings of Symposia in Pure Mathematics \textbf{46} (1987), no.~1, 3--13

\bibitem{ES}
D.~Eisenbud, F.-~O. Schreyer, \emph{Resultants and Chow forms via exterior syzygies}, J. Amer. Math. Soc. \textbf{16} (2003), no.~3, 537--579.

\bibitem{FM}
D.~Faenzi and F.~Malaspina, \emph{Surfaces of minimal degree of tame representation type and mutations of Cohen-Macaulay modules},  Adv. Math. \textbf{310} (2017), 663--695.


\bibitem{GO}
A.~Gorodentsev and S.~A. Kuleshov, \emph{ Helix theory}, Mosc.
Math. J. \textbf{4(2)} (2004), 377--–440.

\bibitem{MR}
R.~M. Mir\'o-Roig, \emph{The representation type of rational normal scrolls}, Rend. Circ. Mat. Palermo \textbf{62} (2013), 153--164.

\bibitem{MP}
R.~M. Mir\'o-Roig and J.~Pons-Llopis, \emph{Representation type of rational aCM surfaces $X\subset \PP^4$}, Alg. Rep. Theory \textbf{16} (2013), 1135--1157.

\bibitem{NT}
M.~S. Narasimhan and G.~Trautmann, \emph{Compactification of $M_{\PP_3}(0,2)$ and Poncelet pairs of conics}, Pacific J. Math. \textbf{145} (1990), no.~2, 255--365.

\bibitem{OSS}
C.~Okonek, M.~Schneider, and H.~Spindler, \emph{Vector bundles
  on complex projective spaces}, Modern Birkh\"auser Classics,
  Birkh\"auser/Springer Basel AG, Basel, 2011, Corrected reprint of the 1988
  edition, With an appendix by S.~I. Gelfand.
  
\bibitem{Park}
E.~Park, \emph{Smooth varieties of almost minimal degree}, J. Algebra \textbf{314} (2007), 185--208. 

\bibitem{Orlov}
D. O.~Orlov, \emph{Projective bundle, monoidal transformations, and derived categories of coherent sheaves}, Russian Acad. Sci. Izv. Math. \textbf{41} (1993), 133--141.

\bibitem{RU}
A.~N. Rudakov, \emph{ Helices and vector bundles}, Seminaire Rudakov, volume
148. Cambridge University Press, 1990.


\end{thebibliography}
\end{document}